\providecommand{\N}{{\mathrm{N}}} 
\providecommand{\C}{{\mathrm{C}}}
\providecommand{\Z}{{\mathrm{Z}}}
\providecommand{\Ind}{{\operatorname{Ind}}}
\providecommand{\gen}[1]{\langle #1 \rangle}
\providecommand{\gen}[1]{\langle #1 \rangle}
\newtheorem{lem}{Lemma}
\newtheorem*{theorem}{Main Theorem}
\newtheorem*{cor}{Corollary}
\newtheorem{question}{Question}
\title{On Groups with the Same Set of Conjugacy Class Sizes as Nilpotent Groups}
\author{
    {Zhou Wei}\\
    {\small Sobolev Institute of Mathematics, Novosibirsk 630090, Russia}\\
    {\small Email: zhouwyx@outlook.com}
}
\date{}
\begin{document}
\pagenumbering{arabic}

\maketitle
\noindent\textbf{Abstract.} 
    We construct examples of groups 
    which have the same set of conjugacy class sizes as nilpotent groups, 
    while their center is trivial. 
    This answers a question posed by A. R. Camina in 2006.

\noindent\textbf{2020 Mathematics Subject Classification}: 20D15, 20D60.

\noindent\textbf{Keywords:} finite group, nilpotent group, conjugacy class.

\section{Introduction}
    Let $G$ be a finite group.
    In \cite{Baer1953}, Baer defined the index of $x$ in $G$, 
    denoted by $\Ind_G(x)$, as $|G : \C_G(x)|$, 
    which represents the size of the conjugacy class of $G$ containing $x$.
    In $\cite{Ito1953}$, 
    It\^o defined the conjugate type vector of $G$ as $(n_1, n_2, \ldots, n_r)$, 
    where $n_1 > n_2 > \ldots > n_r = 1$ are the indices of all elements in $G$.
    Since we are not interested in the ordering of these indices, 
    we will denote the set of indices (sizes of conjugacy classes) by $\N(G)$,
    i.e., $\N(G)=\{n_1,n_2,\ldots,n_r\}$.
    
    Many authors have studied the relationship 
    between the structure of finite groups and the sizes of their conjugacy classes.
    It\^o proved that 
    if $\N(G) = \{1, n\}$, 
    then $G$ must be the direct product of a $p$-group and an abelian $p'$-group \cite{Ito1953}.
    Ishikawa proved that the nilpotent class of such groups is at most $3$ \cite{Ishikawa2002}.
    More results can be found in \cite{Camina2011}. 

    It is easy to see that if $G$ is nilpotent, 
    then $\N(G) = \N(P_1) \times \N(P_2) \times \ldots \times \N(P_k)$, 
    where $P_1,P_2, \ldots, P_k$ are the Sylow subgroups of $G$.
    A natural question is whether the converse holds:

\begin{question}[{\cite[Question 1]{Camina2006}}]
\label{Q1}
    Let $G$ and $H$ be finite groups with $H$ nilpotent. 
    Suppose $G$ and $H$ have the same sets of conjugacy class sizes, 
    is $G$ nilpotent?
\end{question}

    In \cite{Cossey2000}, Cossey proved that 
    every finite set of $p$-powers containing $1$
    can be the set of conjugacy class sizes of some $p$-group.
    Therefore, the above question can be restated as follows:
    If $\N(G)=\Omega_1\times \Omega_2 \times \cdots \times \Omega_r$,
    where $\Omega_i$ is a finite set of $p_i$-powers containing $1$
    and $p_1,p_2,\ldots,p_r$ are distinct primes,
    is $G$ nilpotent?
    The answer is positive in some special cases.
    For example,  
    if $\N(G)=\{1,p_1^{m_1}\}\times \{1,p_2^{m_2}\}\times \cdots \times \{1,p_k^{m_k}\}$,
    where $p_1^{m_1},p_2^{m_2},\ldots,p_k^{m_k}$ are powers of distinct primes,
    then $G$ is nilpotent \cite{Casolo2012}.
    More generally, 
    if $\N(G) = \{1, n_1\} \times \{1, n_2\} \times \cdots \times \{1, n_r\}$, 
    where $n_1, n_2, \ldots, n_r$ are pairwise coprime integers, 
    then $G$ is nilpotent \cite{Gorshkov2024}.
    A more general question is as follows:

    \begin{question} [{\cite[Question 0.1]{Gorshkov2023}}]
        Let $G$ be a group such that $\N(G)=\Omega \times \Delta$.
        Which $\Delta$ and $\Omega$ guarantee that $G\cong A\times B$,
        where $A$ and $B$ are subgroups 
        such that $\N(A)=\Omega$ and $\N(B)=\Delta$?
    \end{question}

    However, the answer to Question \ref{Q1} is not always true, 
    as some counterexamples are provided in \cite{Camina2006}.
    In that paper, 
    A. R. Camina posed a number of questions 
    about the structure of groups 
    with the same set of conjugacy class sizes as nilpotent groups.
    One of them is as follows:

\begin{question} [{\cite[Question 4]{Camina2006}}]
\label{Q3}
    Let $G$ and $H$ be finite groups with $H$ nilpotent. 
    Suppose $\N(G)=\N(H)$, 
    but $G$ is not nilpotent. 
    Does $G$ have a nontrivial centre?
\end{question}
    
    Using GAP\cite{GAP}, 
    we find that Question \ref{Q3} does not have a positive answer in general.
    The smallest counterexamples are two groups of order $486=3^5\times 2$,
    with the set of conjugacy class sizes $\{1,3,27\}\times \{1, 2\}$.
    One of them is SmallGroup(486, 36), and the other is SmallGroup(486, 38).
    Moreover, we constructed the following series of counterexamples.
    
\begin{theorem}
    Let $p$ and $q$ be primes such that $p = 2q + 1$.
    Let $G = H \rtimes (A \rtimes B)$, 
    where $H$, $A$ and $B$ are defined as follows:

    1) $H = K/N$, 
    where $ K = \gen{k_1} \times \gen{k_2} \times \ldots \times \gen{k_p}$ 
    is the direct product of $ p $ cyclic groups of order $p$, 
    and $N = \gen{k_1 k_2 \dots k_p}$;

    2) $A\rtimes B$ is a subgroup of the symmetric group $Sym_p$:
    $A=\gen{\alpha}$ and 
    $B=\gen{\beta}$, 
    where $\alpha=(12\ldots p)$ and $\beta = (m_1\ldots m_q)(n_1\ldots n_q)$,
    with $\{m_1,\ldots,m_q,n_1,\ldots,n_q\}=\{2,3,\ldots,p\}$.
    Additionally, 
    $\alpha ^{\beta}=\alpha ^r$
    where $1<r<q$ and $r^q \equiv 1 \pmod{p}$.
    For any $\gamma\in A\rtimes B$ 
    and $k_1^{x_1}k_2^{x_2}\ldots k_p^{x_p}N\in H$,
    $(k_1^{x_1}k_2^{x_2}\ldots k_p^{x_p}N)^{\gamma}
    =k_{1^{\gamma}}^{x_1}k_{2^{\gamma}}^{x_2}\ldots k_{p^{\gamma}}^{x_p}N$.

    Then $\N(G)=\{1, p, p^{p-2}\}\times \{1, q\}$, 
    and $\Z(G)=1$.
\end{theorem}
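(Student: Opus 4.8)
The plan is to treat $G$ as a semidirect product $H\rtimes L$ with $L=A\rtimes B$, and to extract every class size from the linear action of $L$ on the elementary abelian $p$-group $H$. First I would set up coordinates. Relabel the points $\{1,\dots,p\}$ by $\bbF_p$ so that the fixed point of $\beta$ becomes $0$; then $K$ is the space $\bbF_p^{\,\bbF_p}$ of functions $\bbF_p\to\bbF_p$, the subgroup $N$ is the line $\gen{\mathbf 1}$ of constant functions, and $H=K/N$. In these coordinates $\alpha$ acts as the translation $i\mapsto i+1$ and $\beta$ as the multiplication $i\mapsto ri$, where $r$ has multiplicative order $q$ modulo $p$; thus $L\le\mathrm{AGL}(1,p)$ is a Frobenius group of order $pq$ with kernel $A$ and complement $B$. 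I would record the conjugacy classes of $L$: the identity; two classes of size $q$, namely the nontrivial elements of $A$; and $q-1$ classes of size $p$ consisting of the elements with nontrivial $B$-part, with representatives $\beta^{j}$ ($1\le j\le q-1$). The structural fact I use throughout is that for every nonidentity $\gamma\in L$ one has $\C_L(\gamma)=\gen{\gamma}$, and that $\Z(L)=1$.

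For the centre, the image of $\Z(G)$ in $G/H\cong L$ lies in $\Z(L)=1$, so $\Z(G)\le H$; and since $H$ is abelian, $\Z(G)=\operatorname{Fix}_H(L)$, the space of $L$-fixed vectors. A direct computation identifies $\operatorname{Fix}_H(\alpha)$ with the line spanned by the class of the identity function $i\mapsto i$ (the condition that $\alpha$ move $f$ into $N$ forces the first difference of $f$ to be constant, i.e.\ $f$ affine), so it is one-dimensional; and $\beta$ sends this function to $r^{-1}$ times itself. As $r^{-1}\ne 1$, no nonzero vector of this line is $\beta$-fixed, whence $\operatorname{Fix}_H(L)=\operatorname{Fix}_H(\alpha)\cap\operatorname{Fix}_H(\beta)=0$ and $\Z(G)=1$.

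The heart of the matter is a uniform centraliser count. Writing an element as $h\gamma$ ($h\in H$, $\gamma\in L$) and letting $1-\gamma$ denote the $\bbF_p$-linear endomorphism $x\mapsto x-\gamma x$ of $H$, conjugation by $h_0\gamma_0$ fixes $h\gamma$ precisely when $\gamma_0\in\C_L(\gamma)$ and $(1-\gamma)h_0=(1-\gamma_0)h$. For each admissible $\gamma_0$ the last equation has exactly $|\operatorname{Fix}_H(\gamma)|$ solutions $h_0$, so
\[
|\C_G(h\gamma)|=|\operatorname{Fix}_H(\gamma)|\cdot\bigl|\{\gamma_0\in\C_L(\gamma):(1-\gamma_0)h\in\operatorname{Im}(1-\gamma)\}\bigr|.
\]
Now comes the key simplification: when $\gamma\ne 1$ we have $\C_L(\gamma)=\gen{\gamma}$, so every $\gamma_0=\gamma^{t}$ satisfies $1-\gamma_0=(1-\gamma)(1+\gamma+\dots+\gamma^{t-1})$, forcing $(1-\gamma_0)h\in\operatorname{Im}(1-\gamma)$ for all $h$. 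Hence $|\C_G(h\gamma)|=|\operatorname{Fix}_H(\gamma)|\cdot|\C_L(\gamma)|$, independently of $h$.

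It remains to compute $\dim_{\bbF_p}\operatorname{Fix}_H(\gamma)$ on each class and to analyse the case $\gamma=1$. For nontrivial $\gamma\in A$ the map $\gamma$ permutes the coordinates without fixed points, and the affine-function computation used for $\alpha$ gives $\dim\operatorname{Fix}_H(\gamma)=1$; thus $|\C_G(h\gamma)|=p\cdot p=p^{2}$ and the class size is $p^{\,p-2}q$. For $\gamma=\beta^{j}$ the permutation fixes the coordinate $0$ and breaks $\bbF_p^{\times}$ into the two cosets of $\gen{r}$, so $\operatorname{Fix}_K(\gamma)$ is three-dimensional; because $\gamma$ has a fixed coordinate, moving $f$ into $N$ in fact forces $\gamma f=f$, so $\dim\operatorname{Fix}_H(\gamma)=2$, giving $|\C_G(h\gamma)|=p^{2}q$ and class size $p^{\,p-2}$. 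Finally, when $\gamma=1$ the element lies in the abelian group $H$, so its $G$-class is its $L$-orbit, of size dividing $pq$; since $\Z(G)=1$ the only fixed vector is $0$, and I would exhibit orbits of each remaining size: $q$ on a nonzero vector of the line $\operatorname{Fix}_H(A)$, $p$ on a nonzero vector of the plane $\operatorname{Fix}_H(B)$, and $pq$ on a generic vector (such a vector exists because the vectors with nontrivial stabiliser lie in the union of $\operatorname{Fix}_H(A)$ and the $p$ planes $\operatorname{Fix}_H(B^{x})$, at most about $p^{3}$ vectors, far fewer than $|H|=p^{\,p-1}$ once $p\ge 5$). Collecting the sizes $1,\,q,\,p,\,pq,\,p^{\,p-2},\,p^{\,p-2}q$ gives $\N(G)=\{1,p,p^{\,p-2}\}\times\{1,q\}$. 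I expect the main obstacle to be exactly these fixed-space dimension counts — in particular keeping straight why passing to $H=K/N$ raises the fixed dimension for the fixed-point-free $\alpha$ but not for $\beta^{j}$ — together with the counting that guarantees a full orbit of size $pq$ actually occurs.
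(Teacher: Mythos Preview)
Your proposal is correct and complete; the fixed-space computations and the counting for the regular $L$-orbit are exactly right, including the subtle point that the extra fixed vector appearing in $H=K/N$ for $\alpha$ (coming from affine functions) does not appear for $\beta^{j}$ because $\beta^{j}$ has a fixed coordinate, forcing the constant shift to vanish.

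Your route differs from the paper's in organisation rather than in content, but the reorganisation is genuinely useful. The paper treats the six coset types $a,\,b,\,ab,\,h,\,ha,\,hb\ (\sim hab)$ separately: for $ha$ it shows by hand that $\C_G(ha)\le HA$, checks $(ha)^p=1$, rewrites $HA=H\rtimes\gen{ha}$, and reads off $|\C_G(ha)|=p^{2}$; for $hb$ it first shows $\C_G(hb)\le HB$ and then invokes a preparatory lemma (essentially your formula restricted to $HB$). You replace all of these with the single observation that in the Frobenius group $L=A\rtimes B$ of order $pq$ one has $\C_L(\gamma)=\gen{\gamma}$ for every $\gamma\ne 1$, so that $1-\gamma_0$ factors through $1-\gamma$ and the solvability condition $(1-\gamma_0)h\in\operatorname{Im}(1-\gamma)$ is automatic; hence $|\C_G(h\gamma)|=|\operatorname{Fix}_H(\gamma)|\cdot|\gen{\gamma}|$ uniformly. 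This is cleaner and explains \emph{why} the answer is independent of $h$, whereas the paper verifies that independence case by case. The trade-off is that your argument leans on the $\bbF_p[L]$-module viewpoint and the factorisation $1-\gamma^{t}=(1-\gamma)(1+\gamma+\cdots+\gamma^{t-1})$, while the paper's argument stays at the level of explicit permutations and would be easier to reproduce for a reader unfamiliar with that language. For the elements of $H$ and for $\Z(G)=1$ the two approaches coincide: both exhibit $\operatorname{Fix}_H(A)$ and $\operatorname{Fix}_H(B)$, check that they meet trivially, and count that at most $p+p\cdot p^{2}$ vectors have nontrivial stabiliser, which is below $p^{\,p-1}$ for $p\ge 5$.
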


    From this theorem, the following corollary can be derived.

\begin{cor}
    Let $p$ and $q$ be primes such that $p = 2q + 1$. 
    Let $G$, $H$ and $A$ be as defined above. 
    Let $L = P \times Q$, 
    where $P = H \rtimes A$ and $Q = C_{q^2}\rtimes C_q$. 
    Then, we have $\N(G) = \N(L)$.
\end{cor}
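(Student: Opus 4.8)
The plan is to exploit the direct-product structure of $L$. Since $L = P \times Q$, the centralizer of $(x,y) \in P \times Q$ is $\C_P(x) \times \C_Q(y)$, so $\N(L) = \N(P) \times \N(Q)$. By the Main Theorem $\N(G) = \{1,p,p^{p-2}\} \times \{1,q\}$, so it suffices to prove $\N(P) = \{1, p, p^{p-2}\}$ and $\N(Q) = \{1, q\}$. Note that $P = H \rtimes A$ is in fact a Sylow $p$-subgroup of $G$ (as $|G| = p^p q$ and $|P| = p^p$), but class sizes in a subgroup differ from those in $G$, so a direct computation is required rather than an appeal to the Main Theorem.

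For $Q = C_{q^2} \rtimes C_q$, I would first note that $Q$ is nonabelian of order $q^3$ (the semidirect product must be nontrivial, else $\N(Q) = \{1\}$ and the product would not match). Writing $C_{q^2} = \gen{a}$ and $C_q = \gen{b}$ with $b^{-1} a b = a^{1+q}$, one checks that this automorphism has order $q$ and that $\Z(Q) = \gen{a^q}$ has order $q$. In any group of order $q^3$ whose centre has order $q$, every noncentral conjugacy class has size $q$, since the centralizer of a noncentral element properly contains the centre and hence has index $q$; this gives $\N(Q) = \{1, q\}$.

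The core of the argument is the computation of $\N(P)$. Here $H$ is elementary abelian of order $p^{p-1}$, which I would identify with the $\bbF_p[A]$-module $\bbF_p[t]/((t-1)^{p-1})$, where $t$ acts as the cyclic shift $\alpha$: indeed $\bbF_p[A] \cong \bbF_p[t]/(t^p-1) = \bbF_p[t]/((t-1)^p)$ in characteristic $p$, the regular module $K$ is uniserial, and $H = K/N$ is its quotient by the socle $N$. Writing $T$ for the action of $\alpha$ on $H$ and using additive notation, a direct computation shows that conjugation by $(h_1, \alpha^{i_1})$ sends $(h, \alpha^i)$ to $(T^{i_1} h + (I - T^i) h_1,\ \alpha^i)$; in particular the $A$-component is a conjugacy invariant, so each class lies inside a coset $H \times \{\alpha^i\}$.

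For $i = 0$ the class of $(h, 1)$ is just the $A$-orbit of $h$, of size $1$ if $h \in \ker(T-I)$ and $p$ otherwise; since $\ker(T - I) = \Soc(H)$ is one-dimensional, this contributes $p$ classes of size $1$ and $p^{p-2}-1$ classes of size $p$. For $i \ne 0$ the class of $(h, \alpha^i)$ has first coordinates exactly $h + \operatorname{Im}(I - T^i)$, because $T^{i_1}h - h \in \operatorname{Im}(I - T^{i_1})$ is absorbed into this image. The key point is that $1 - t^i = -(t-1)u(t)$ with $u(t) = 1 + t + \dots + t^{i-1}$ satisfying $u(1) = i \not\equiv 0 \pmod p$, so $u(t)$ is a unit in the local ring $\bbF_p[t]/((t-1)^{p-1})$ and $\operatorname{Im}(I - T^i) = (t-1)H$ is the radical, of dimension $p-2$; hence every class in $H \times \{\alpha^i\}$ has size $p^{p-2}$. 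Collecting the cases gives $\N(P) = \{1, p, p^{p-2}\}$, and a count ($p \cdot 1 + (p^{p-2}-1)p + (p-1)p \cdot p^{p-2} = p^p$) confirms the classes exhaust $P$. The main obstacle is precisely this module-theoretic step: pinning down $\operatorname{Im}(I - T^i)$, for which the cleanest route is the observation that $\bbF_p[A]$ is local with $H$ uniserial, so $I - T^i$ and $I - T$ share the same radical image for every $i \not\equiv 0 \pmod p$.
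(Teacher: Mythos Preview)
Your argument is correct. The paper gives no explicit proof of the Corollary beyond the remark that it can be derived from the Main Theorem, and the intended derivation is shorter than yours: the centralizers in $G$ of all elements of $P = HA$ are already determined in the proof of the Main Theorem, and they happen to lie inside $P$. Concretely, step~(1) gives $\C_G(a) = \C_H(a)A \leq P$ and step~(5) gives $\C_G(ha) = \C_H(a)\gen{ha} \leq P$, both of order $p^2$, so $\Ind_P(a) = \Ind_P(ha) = p^{p-2}$; and step~(4) together with~(1) shows that for nontrivial $h \in H$ one has $\C_P(h) = P$ or $H$ according as $h$ lies in the order-$p$ subgroup $\C_H(\alpha)$ or not. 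Reading off the indices in $P$ yields $\N(P) = \{1, p, p^{p-2}\}$ with no fresh computation, and $\N(Q)=\{1,q\}$ is immediate as you say.

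Your route to $\N(P)$ is genuinely different: rather than recycling those centralizer calculations, you recompute everything by identifying $H$ with the uniserial $\bbF_p[A]$-module $\bbF_p[t]/((t-1)^{p-1})$ and using that $1 - t^i$ and $1 - t$ differ by a unit of this local ring, so that $\operatorname{Im}(I - T^i)$ is the radical $(t-1)H$ for every $i \not\equiv 0 \pmod p$. This is more conceptual and self-contained---it does not presuppose the element-by-element case analysis of the Main Theorem---and it explains structurally why each coset $H\alpha^i$ with $i \neq 0$ decomposes into conjugacy classes of uniform size $p^{p-2}$. The paper's route is quicker here only because the relevant centralizers were already on the page; your module-theoretic argument would port more cleanly to variants of the construction.
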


    A prime number $q$ such that $2q + 1$ is also a prime is called a Sophie Germain prime.
    The largest known proven Sophie Germain prime is $2618163402417\times 2^{1290000}-1$ \cite{Bastan2018}.
    It is conjectured that there are infinitely many Sophie Germain primes, 
    but this has not been proven.
    So we cannot conclude that there are infinitely many counterexamples to Question \ref{Q3}.

\section{Preliminaries}

\begin{lem}
\label{L1}
    Let $G$ be a finite group, $H\leq G$ and $x\in G$. 
    If $n$ is an integer and $(n,|x|)=1$, then $\C_H(x)=\C_H(x^n)$.
\end{lem}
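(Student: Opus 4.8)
The plan is to reduce the statement to the elementary fact that coprimality of $n$ with $|x|$ forces $x^n$ to generate the same cyclic subgroup as $x$. Concretely, since $(n, |x|) = 1$, Bézout's identity supplies integers $a, b$ with $an + b|x| = 1$. Raising $x$ to this exponent and using $x^{|x|} = 1$ gives
\[
    x = x^{an + b|x|} = (x^n)^a (x^{|x|})^b = (x^n)^a,
\]
so that $x$ is a power of $x^n$. Since $x^n$ is trivially a power of $x$, this yields $\gen{x} = \gen{x^n}$.

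With this identity in hand, the two inclusions are immediate. If $h \in \C_H(x)$, then $h$ commutes with every element of $\gen{x}$, in particular with $x^n$, so $h \in \C_H(x^n)$, giving $\C_H(x) \subseteq \C_H(x^n)$. Conversely, if $h \in \C_H(x^n)$, then $h$ commutes with every power of $x^n$; since $x = (x^n)^a$ is such a power, $h$ commutes with $x$, so $h \in \C_H(x)$, giving the reverse inclusion. Combining the two yields $\C_H(x) = \C_H(x^n)$.

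There is essentially no obstacle here: the only real ingredient is the Bézout computation above, after which the centralizer inclusions follow formally from the observation that an element commuting with a given group element commutes with all of its powers. The hypothesis $H \leq G$ plays no role beyond restricting attention to elements of $H$; indeed the same argument shows $\C_G(x) = \C_G(x^n)$, and intersecting with $H$ recovers the stated equality. I would present the proof in exactly this order, keeping the Bézout step explicit since it is the sole nontrivial point.
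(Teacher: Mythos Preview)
Your proof is correct and follows essentially the same approach as the paper: both establish the nontrivial inclusion $\C_H(x^n)\subseteq\C_H(x)$ by exhibiting $x$ as a power of $x^n$. The only cosmetic difference is that you invoke B\'ezout's identity to produce the exponent $a$ with $x=(x^n)^a$, whereas the paper uses Euler's theorem to write $x=(x^n)^{n^{t-1}}$ with $t=\varphi(|x|)$; these are equivalent ways of saying that $n$ is invertible modulo $|x|$.
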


\begin{proof}
    It is clear that $\C_H(x)\leq \C_H(x^n)$.
    By Euler's theorem, we have $n^t\equiv 1 \pmod{|x|}$, where $t=\varphi(|x|)$.
    Hence $x=(x^n)^{n^{t-1}}$ and so $\C_H(x^n)\leq \C_H(x)$.
    Therefore $\C_H(x)=\C_H(x^n)$.
\end{proof}

\begin{lem}
\label{L2}
    Let $G=H\rtimes \gen{a}$, where $H$ is an abelian group and $(|H|,|a|)=1$.
    Then for any element $h$ of $H$, $\Ind_G(ha)=\Ind_G(a)=|H:\C_H(a)|$.
\end{lem}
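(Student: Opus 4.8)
The plan is to compute the two centralizers $\C_G(a)$ and $\C_G(ha)$ directly and show they have the same order. Write $\theta$ for the automorphism of $H$ given by $\theta(x)=axa^{-1}$, and recall that in the semidirect product every element of $G$ is uniquely of the form $h_1 a^j$ with $h_1\in H$ and $0\le j<|a|$. Since $H$ is abelian, the map $\psi\colon H\to H$, $\psi(x)=x^{-1}\theta(x)$, is an endomorphism whose kernel is exactly $\C_H(a)=\{x:\theta(x)=x\}$; thus $|\mathrm{Im}\,\psi|=|H:\C_H(a)|$. This $\psi$ will be the main bookkeeping device.

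First I would treat $a$ itself. A short computation shows that $h_1 a^j$ commutes with $a$ if and only if $\theta(h_1)=h_1$, i.e. $h_1\in\C_H(a)$, with $j$ arbitrary. Hence $\C_G(a)=\C_H(a)\gen{a}$ and $|\C_G(a)|=|\C_H(a)|\,|a|$, giving $\Ind_G(a)=|G|/|\C_G(a)|=|H|/|\C_H(a)|=|H:\C_H(a)|$, as claimed. Next, for $ha$ I would expand the commuting condition $(h_1 a^j)(ha)=(ha)(h_1 a^j)$; moving everything to one side and comparing the $H$-parts (the $\gen{a}$-parts agree automatically), this reduces, using commutativity of $H$, to the single equation $\psi(h_1)=\theta^j(h)\,h^{-1}$ for the unknown $h_1$, with $j$ ranging over $0,\dots,|a|-1$. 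For each fixed $j$ this equation either has no solution or has exactly $|\ker\psi|=|\C_H(a)|$ solutions.

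The hard part, and the crux of the lemma, is to show that the right-hand side $\theta^j(h)h^{-1}$ always lies in $\mathrm{Im}\,\psi$, so that a solution exists for every $j$. I would prove this by a telescoping identity: since $H$ is abelian and $\psi$ is a homomorphism,
\[
\psi\bigl(h\,\theta(h)\cdots\theta^{j-1}(h)\bigr)=h^{-1}\theta^j(h)=\theta^j(h)h^{-1},
\]
so the required element is indeed a value of $\psi$. Equivalently, the coprimeness hypothesis $(|H|,|a|)=1$ gives the decomposition $H=\C_H(a)\times[H,a]$ with $\mathrm{Im}\,\psi=[H,a]$, and $\theta^j(h)h^{-1}\in[H,a]$ by construction. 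Consequently each of the $|a|$ values of $j$ contributes exactly $|\C_H(a)|$ elements to $\C_G(ha)$, and these are distinct since they lie in distinct cosets of $H$. Therefore $|\C_G(ha)|=|a|\,|\C_H(a)|=|\C_G(a)|$, whence $\Ind_G(ha)=\Ind_G(a)=|H:\C_H(a)|$.
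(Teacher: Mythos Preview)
Your proof is correct, but it follows a genuinely different route from the paper's.

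The paper argues structurally: setting $n=|a|$, it observes that $(ha)^n\in H$, so $n\mid |ha|$; writing $|ha|=nt$, the element $(ha)^t$ has order $n$ and therefore generates a new complement to $H$ in $G$. Since $H$ is abelian one has $\C_H((ha)^t)=\C_H(a^t)$, and the coprimeness $(|H|,n)=1$ forces $(t,n)=1$, whence Lemma~\ref{L1} gives $\C_H(a^t)=\C_H(a)$. Finally the sandwich $\C_H(a)\gen{(ha)^t}\le \C_G(ha)\le \C_G((ha)^t)=\C_H(a)\gen{(ha)^t}$ pins down $\C_G(ha)$.

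Your argument instead solves the centralizer equation coset by coset, and the telescoping identity $\psi\bigl(h\,\theta(h)\cdots\theta^{j-1}(h)\bigr)=\theta^j(h)h^{-1}$ is the key step. Two remarks are worth making. First, your proof is self-contained: it never appeals to Lemma~\ref{L1}. Second, and more interestingly, the telescoping step nowhere uses the hypothesis $(|H|,|a|)=1$; your argument therefore proves the stronger statement that $\Ind_G(ha)=\Ind_G(a)$ for \emph{any} semidirect product $G=H\rtimes\gen{a}$ with $H$ abelian. The Fitting decomposition $H=\C_H(a)\times[H,a]$ you mention as an alternative does require coprimeness, but it is redundant once the telescoping is in hand. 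By contrast, the paper's route genuinely needs coprimeness to invoke Lemma~\ref{L1}. So your approach is both more elementary and slightly more general, while the paper's is more conceptual in that it exhibits $ha$ as essentially a replacement generator for the acting cyclic group.
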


\begin{proof}
    Let $|a|=n$. 
    It is easy to verify that $\C_G(a)=\C_H(a)\gen{a}$ and $\Ind_G(a)=|H:\C_H(a)|$.
    Since 
    $(ha)^n = h h^{a^{-1}} \ldots h^{a^{1-n}} a^n = h h^{a^{-1}} \ldots h^{a^{1-n}} \in H$,
    $n$ is a divisor of $|ha|$.
    Let $t=|ha|/n$. 
    We have $(ha)^t$ is an element of order $n$ 
    and $\gen{(ha)^t}$ is a complement to $H$ in $G$.
    Hence $G=H\rtimes \gen{(ha)^t}$ 
    and so $\C_G((ha)^t)=\C_H((ha)^t)\gen{(ha)^t}$.
    Since $(|H|,n)=1$, we have $(t, n)=1$.
    By Lemma \ref{L1}, $\C_H((ha)^t)=\C_H(a^t)=\C_H(a)$.
    Since $\C_H(a)\gen{(ha)^t}\leq \C_G(ha)\leq \C_G((ha)^t)$,
    we have $\C_G(ha)=\C_H(a)\gen{(ha)^t}$.
    Therefore $\Ind_G(ha)=\Ind_G(a)=|H:\C_H(a)|$.
\end{proof}

\section{Proof the main theorem}
    Let $G, A, B, H, N$ be as defined in the main theorem.
    For convenience,
    we use $(x_1, x_2, \ldots, x_p)$ to represent the element 
    $k_1^{x_1} k_2^{x_2} \ldots k_p^{x_p} N$ of $H$, $x_1, \ldots, x_p \in \mathbb{N}$.
    Under this notation, we have $(x,x,\ldots,x)=1$, $\forall x\in \mathbb{N}$.
    We can always set $x_1 = 0$, 
    in which case $x_2, \ldots, x_p$ are determined.
    Let $h = (0, x_2, \ldots, x_p) \in H$, $a\in A$, $b\in B$ and $h, a, b\neq 1$.
    It is clear that $|G|=p^pq$.

(1) $|\C_H(a)|=p$ and $\Ind_G(a)=p^{p-2}q$.

    By Lemma \ref{L1}, it suffices to consider the case $a = \alpha$,
    i.e., when $(0, x_2, \ldots, x_{p-1}, x_p)^a = (x_p, 0,x_1, \ldots, x_{p-1})$.
    If $h \in \C_H(a)$, we have
    $ 0 - x_p\equiv  x_2 - 0 \equiv \ldots \equiv x_p - x_{p-1}\pmod{p}.$
    If $x_p = 1$, then $h = (0, p-1, p-2, \ldots, 1)$.
    In fact, $\C_H(a) = \gen{(0, p-1, p-2, \ldots, 1)}$. 
    Therefore $|\C_H(a)|=p$.
    
    Let $h_1 a_1 b_1 \in \C_G(a)$, where $h_1\in H$, $a_1\in A$ and $b_1\in B$.
    We have $h_1 a_1 b_1=(h_1 a_1 b_1)^a = h_1^a a_1 b_1^a = h_1^a (a_1 a^{-1}a^{b_1^{-1}}) b_1$.
    It follows that $h_1\in C_H(a)$ and $b_1=1$.
    Hence $\C_G(a) = \C_H(a) A$.
    Thus, $|\C_G(a)|= p^2$ and so $\Ind_G(a) = p^{p-2}q$.

(2) $|\C_H(b)|=p^2$ and $\Ind_G(b)=p^{p-2}$.

    It is easy to verify that $\C_G(b)=\gen{k_{m_1}\ldots k_{m_q}N, k_{n_1}\ldots k_{n_q}N}$.
    Therefore, $\C_H(b)=p^2$.
    Moreover, $\C_H(a)\cap \C_H(b)=1$.

    If $h_1 a_1 b_1\in \C_G(b)$, then
    $h_1 a_1 b_1 = (h_1 a_1 b_1)^b = h_1^b a_1^b b_1$.
    It follows that $h_1\in \C_H(b)$ and $a_1 =1$.
    Therefore $\C_G(b)= \C_H(b)B$.
    We have $|\C_G(b)|=p^2q$ and so $\Ind_G(b)=p^{p-2}$.

(3) $\Ind_G(ab)=p^{p-2}$.

    By Sylow's theorems, $AB$ has $p$ Sylow $q$-subgroups.
    Since that $p(q-1)+p = pq = |AB|$, 
    every element in $AB-A$ has order $q$.
    Hence $ab$ must be contained in some conjugate of $B$.
    Thus,  $\Ind_G(ab) = \Ind_G(b) = p^{p-2}$.

(4) $\{\Ind_G(h)\mid h\in H\}= \{p,q,pq\}$.

    It is clear that $H\leq \C_G(h)$.
    If $h\in \C_G(a)$, then $\C_G(h)=HA$ and $\Ind_G(h)=q$.
    If $h\in \C_G(b)$ or $\C_G(ab)$, $\C_G(h)=HB$ or $H\gen{ab}$ and $\Ind_G(h)=p$.
    The number of such $h$ in all the cases above 
    is at most $|\C_H(a)| + p|\C_H(b)| = p^3 + p$. 
    Here $p$ must be greater than or equal to $5$, 
    so $p^3 + p <p^{p-1}=|H|$.
    Hence there exists $h\in H$ such that $\C_G(H)=H$.
    For such $h$, $\Ind_G(h)=pq$.

(5) $\Ind_G(ha)=p^{p-2}q$.

    Let $h_1 a_1 b_1 \in \C_G(ha)$, where $h_1\in H$, $a_1\in A$ and $b_1\in B$.
    We have $ha = (ha)^{h_1 a_1 b_1} = (h h_1^{-1} h_1^{a^{-1}})^{a_1 b_1} a^{b_1}$.
    Hence $b_1=1$ and so $\C_G(ha)= \C_{HA}(ha)$.
    We have $(ha)^p=(h h^{a^{-1}} h^{a^{-2}}\ldots h^{a^{1-p}}) a^p
    =h h^{a^{-1}} h^{a^{-2}}\ldots h^{a^{1-p}}$.
    If $h=(x_1, x_2, \ldots, x_p)$, then
    $h h^{a^{-1}} h^{a^{-2}}\ldots h^{a^{1-p}}
    =(x_1+\ldots + x_p, \ldots, x_1+\ldots + x_p) = 1$.
    Hence $ha$ is an element of order $p$.
    Since $\gen{ha}\cap H=1$, 
    We have $HA=H\rtimes \gen{ha}$.
    Therefore $\C_G(ha)=\C_H(ha)\gen{ha}=\C_H(a)\gen{ha}$.
    Thus $|\C_G(ha)|=p^2$ and so $\Ind_G(ha)=p^{p-2}q$.

(6) $\Ind_G(hab)=\Ind_G(hb)=p^{p-2}$.

    By (3), $ab$ and $b$ are conjugate, 
    so $hab$ must be conjugate to $h'b$ 
    where $h'$ is some element in $H$. 
    Thus, we only need to consider $\Ind_G(hb)$.
    Let $h_1 a_1 b_1 \in \C_G(hb)$ where $h_1\in H, a_1\in A, b_1\in B$.
    We have
    $
        hb
        = (hb)^{h_1 a_1 b_1}=(h h_1^{-1} h_1^{b^{-1}} b)^{a_1 b_1}
        = (h h_1^{-1} h_1^{b^{-1}})^{a_1 b_1} (a_1^{-1} a_1^{b^{-1}})^{b_1} b
    $.
    Hence $a_1=1$ and $\C_G(hb)=\C_{HB}(hb)$.
    It follows that $\Ind_G(hb) = \Ind_{HB}(hb)\times p$.
    By Lemma \ref{L2}, $\Ind_{HB}(hb)=|H:\C_H(b)|=p^{p-3}$.
    Therefore, $\Ind_G(hb)=p^{p-2}$.
    
    From (1)--(6), all nontrivial elements of $G$ have been considered, 
    so $\N(G)=\{1, p, p^{p-2}\}\times \{1, q\}$ 
    and $\Z(G)=1$.
    The theorem is proved.


\end{document}